\newsavebox\CBox
\newcommand\hcancel[2][0.5pt]{%
  \ifmmode\sbox\CBox{$#2$}\else\sbox\CBox{#2}\fi%
  \makebox[0pt][l]{\usebox\CBox}%  
  \rule[0.5\ht\CBox-#1/2]{\wd\CBox}{#1}}
\numberwithin{equation}{section}
\theoremstyle{definition}  
\title[Generalised Killing Spinors on Three-Dimensional Lie Groups]{Generalised Killing Spinors on \\ Three-Dimensional Lie Groups}
\author[Diego Artacho]
{Diego Artacho} 
\address{Department of Mathematics, Imperial College London \\  Email: d.artacho21@imperial.ac.uk \\ ORCID: 0000-0002-1390-9980 \\ MSC: 	53C27 (Primary), 22E60, 15A66 (Secondary)}
\def\@makechapterhead#1{%
  \vspace*{10\p@}%
  {\parindent \z@ \raggedright \sffamily
    \interlinepenalty\@M
    \Huge\bfseries \thechapter \space\space #1\par\nobreak
    \vskip 30\p@
  }}
\def\@makeschapterhead#1{%
  \vspace*{10\p@}%
  {\parindent \z@ \raggedright
    \sffamily
    \interlinepenalty\@M
    \Huge \bfseries  #1\par\nobreak
    \vskip 30\p@
  }}
\DeclareRobustCommand\mapstofill{%
  $\m@th
  {\mapstochar}%
  \smash-\mkern-7mu
  \cleaders\hbox{$\mkern-2mu\smash-\mkern-2mu$}\hfill
  \mkern-7mu
  \mathord\rightarrow
  $%
}
  \def\subsection{\@startsection{subsection}{2}%
    \z@{.5\linespacing\@plus.7\linespacing}
  % make this >0 for display heading
  {.5\baselineskip}%
  % add centering
    {\normalfont\flushleft\scshape}%
  }
  \def\subsubsection{\@startsection{subsubsection}{2}%
    \z@{.5\linespacing\@plus.7\linespacing}
  % make this >0 for display heading
  {.5\baselineskip}%
  % add centering
    {\normalfont\flushleft\scshape}%
  }
  \patchcmd{\section}{\scshape}{\bfseries}{}{}
  \patchcmd{\subsection}{\scshape}{\bfseries}{}{}
  \patchcmd{\subsubsection}{\scshape}{\bfseries}{}{}
  \renewcommand{\@secnumfont}{\bfseries}
\def\@tocline#1#2#3#4#5#6#7{\relax
  \ifnum #1>\c@tocdepth % then omit
  \else
    \par \addpenalty\@secpenalty\addvspace{#2}%
    \begingroup \hyphenpenalty\@M
    \@ifempty{#4}{%
      \@tempdima\csname r@tocindent\number#1\endcsname\relax
    }{%
      \@tempdima#4\relax
    }%
    \parindent\z@ \leftskip#3\relax \advance\leftskip\@tempdima\relax
    \rightskip\@pnumwidth plus4em \parfillskip-\@pnumwidth
    #5\leavevmode\hskip-\@tempdima
      \ifcase #1
       \or\or \hskip 1em \or \hskip 2em \else \hskip 3em \fi%
      #6\nobreak\relax
    \hfill\hbox to\@pnumwidth{\@tocpagenum{#7}}\par% <---- \dotfill -> \hfill
    \nobreak
    \endgroup
  \fi}
\patchcmd{\@abstract}{\large}{\large}{}{}
\newcommand{\R}[0]{\mathbb{R}} 
\newcommand{\N}[0]{\mathbb{N}} 
\newcommand{\C}[0]{\mathbb{C}}
\DeclareMathOperator{\vecspan}{span}
\DeclareMathOperator{\Id}{Id}
\DeclareMathOperator{\GL}{GL}
\DeclareMathOperator{\Spin}{Spin}
\DeclareMathOperator{\Ric}{Ric}
\DeclareMathOperator{\Mat}{M}
\DeclareMathOperator{\End}{End}
\theoremstyle{definition}
\newtheorem{deff}{Definition}[section]
\newtheorem{rem}[deff]{Remark}
\theoremstyle{definition}
\newtheorem{thmm}[deff]{Theorem}
\newtheorem*{thmm*}{Theorem}
\newtheorem{lemma}[deff]{Lemma}
\newtheoremstyle{named}{}{}{\itshape}{}{\bfseries}{.}{.5em}{\thmnote{#3's }#1}
\theoremstyle{named}
\begin{document}

\keywords{Generalised Killing spinors, Heisenberg groups, Lie groups, spin geometry}

\maketitle  
\begin{abstract} 
    \normalsize 
    We present a complete classification of invariant generalised Killing spinors on three-dimensional Lie groups. We show that, in this context, the existence of a non-trivial invariant generalised Killing spinor implies that all invariant spinors are generalised Killing with the same endomorphism. Notably, this classification is independent of the choice of left-invariant metric. To illustrate the computational methods underlying this classification, we also provide the first known examples of homogeneous manifolds admitting invariant generalised Killing spinors with $n$ distinct eigenvalues for each $n > 4$. 
\end{abstract}
 
\tableofcontents

\clearpage

\section{Introduction} \label{sec:intro}

The study of special spinors on Riemannian manifolds is of great geometric interest. For instance, the existence of \textit{parallel} spinors characterises manifolds with special holonomy \cite{W89}. In this paper, we focus on \textit{generalised Killing spinors}, which are defined as follows. Let $(M,g)$ be a Riemannian manifold equipped with a spin structure, and let $\Sigma M$ be the associated spinor bundle. A spinor $\psi \in \Gamma \left( \Sigma M \right)$ is said to be generalised Killing if there exists a $g$-symmetric endomorphism field $A \in \Gamma \left( \End(TM) \right)$ such that, for every vector field $X \in \Gamma\left(TM\right)$, 
\begin{equation} \label{eq:genkil}
\widetilde{\nabla}^{g}_X \psi = A(X) \cdot \psi \, , 
\end{equation}
where $\widetilde{\nabla}^{g}$ is the spin connection and $\cdot$ denotes Clifford multiplication \cite{KF00}. Note that this generalises the notion of parallel ($A=0$) and Killing ($A = \lambda \Id$, $\lambda \in \mathbb{C}$) spinors. The study of generalised Killing spinors is motivated by the fact that a parallel spinor on an $(n+1)$-dimensional Riemannian manifold induces a generalised Killing spinor on any hypersurface, with the endomorphism field $A$ corresponding to twice the second fundamental form. Conversely, in the real analytic setting, an $n$-dimensional Riemannian manifold with a generalised Killing spinor can be embedded into an $(n+1)$-dimensional Riemannian manifold with a parallel spinor \cite{AMM}. 

These spinors have been extensively studied in the context of submanifold theory \cite{F98,BGM,LR}, special geometries and $G$-structures \cite{ContiSalamon}. However, their study proves more difficult than that of parallel or Killing spinors. There are still many open problems concerning them; for example, a classification is out of reach, even for the case of simple spaces like the round sphere $S^3$ -- see \cite{MS14}. 

In this paper, we consider left-invariant metrics on three-dimensional Lie groups, and restrict our attention to those spinors which are left-invariant. The problem can then be treated at the level of Lie algebras. Bianchi \cite{bianchi} classified real three-dimensional Lie algebras into nine types: seven individual ones ($\mathrm{Bian}(\text{I})$ -- $\mathrm{Bian}(\text{V})$, $\mathrm{Bian}(\text{VIII})$ and $\mathrm{Bian}(\text{IX})$) and two infinite one-parameter families ($\mathrm{Bian}(\text{VI}_{x})$ and $\mathrm{Bian}(\text{VII}_{y})$, with $-1 \leq x < 1$, $x \neq 0$ and $y \geq 0$). We use the more modern treatment \cite{M63} as a reference for explicit realisations of these Lie algebras -- see Appendix \ref{appendix}. 

The main result is given in Theorem \ref{thm:3dim}, and is summarised in Table \ref{table:3-dim}. Remarkably, the existence of a non-trivial invariant generalised Killing spinor implies that all invariant spinors are generalised Killing with the same endomorphism. This makes it possible to carry out computations that depend only on the metric, and not on the choice of a spinor. Moreover, the question whether all invariant spinors are generalised Killing is independent of the choice of left-invariant metric. 

We say that a generalised Killing spinor $\psi$ \textit{has $r$ distinct eigenvalues} if the endomorphism field $A$ does. If $r=1$, $\psi$ is a Killing spinor. On homogeneous manifolds, invariant examples with two \cite{BFGK,AHL}, three \cite{AFS,3adsasaki} and four \cite{AHL} distinct eigenvalues are known. In Section \ref{sec:heisenberg}, to illustrate the methods required to prove Theorem \ref{thm:3dim}, we prove the following -- see Theorem \ref{thmm:main}: 

\begin{thmm*}
    For each $n \in \N$, the $n$-th real Heisenberg group $H_{2n+1}$ has a left-invariant metric which carries an invariant generalised Killing spinor with $(n+1)$ distinct eigenvalues.
\end{thmm*}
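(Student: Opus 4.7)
The plan is to use left-invariance to reduce the generalised Killing equation to algebra on the spin representation of $\Spin(2n+1)$. Take the standard basis $\{e_1,\dots,e_n,f_1,\dots,f_n,z\}$ of $\mathfrak{h}_{2n+1}$ with non-trivial brackets $[e_k,f_k]=z$, and fix positive parameters $a_k,b_k,c$ to be chosen later. Equip $H_{2n+1}$ with the left-invariant metric making $E_k:=e_k/a_k$, $F_k:=f_k/b_k$, $Z:=z/c$ orthonormal, so that $[E_k,F_k]=\lambda_k Z$ with $\lambda_k=c/(a_k b_k)$; simple-connectedness of $H_{2n+1}$ makes the spin structure unique.

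A direct Koszul computation shows that the only non-zero Levi-Civita coefficients are $\nabla_{E_k}F_k=-\nabla_{F_k}E_k=\tfrac{\lambda_k}{2}Z$, $\nabla_{E_k}Z=\nabla_Z E_k=-\tfrac{\lambda_k}{2}F_k$ and $\nabla_{F_k}Z=\nabla_Z F_k=\tfrac{\lambda_k}{2}E_k$, so for any left-invariant spinor $\psi$,
\[
\widetilde{\nabla}_{E_k}\psi=\tfrac{\lambda_k}{4}\,F_k\cdot Z\cdot\psi, \quad \widetilde{\nabla}_{F_k}\psi=-\tfrac{\lambda_k}{4}\,E_k\cdot Z\cdot\psi, \quad \widetilde{\nabla}_Z\psi=-\tfrac{1}{4}\sum_{k=1}^{n}\lambda_k\,E_k\cdot F_k\cdot\psi.
\]
The commuting operators $E_k\cdot F_k$ admit a common eigenbasis $\{\psi_\epsilon\}_{\epsilon\in\{\pm1\}^n}$ of $\Sigma$ with $E_k\cdot F_k\cdot\psi_\epsilon=i\epsilon_k\psi_\epsilon$, and since the Clifford volume $E_1F_1\cdots E_nF_nZ$ is central in the odd-dimensional Clifford algebra, $Z\cdot\psi_\epsilon=\mu_\epsilon\psi_\epsilon$ for a scalar $\mu_\epsilon$ which a short calculation using $(E_1F_1\cdots E_nF_n)^2=(-1)^n$ and $Z^2=-1$ shows is purely imaginary. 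Setting $\psi:=\psi_\epsilon$, one obtains $\widetilde{\nabla}_Z\psi_\epsilon=-\tfrac{i}{4}\big(\sum_k\lambda_k\epsilon_k\big)\psi_\epsilon$, while $\widetilde{\nabla}_{E_k}\psi_\epsilon$ and $\widetilde{\nabla}_{F_k}\psi_\epsilon$ lie in $\C\cdot\psi_{\epsilon'}$, where $\epsilon'$ is obtained from $\epsilon$ by flipping the $k$-th sign.

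Clifford multiplication by $E_j$ or $F_j$ with $j\neq k$ sends $\psi_\epsilon$ into the distinct weight line $\C\cdot\psi_{\epsilon''}$, and by $Z$ into $\C\cdot\psi_\epsilon$; since $v\mapsto v\cdot\psi_\epsilon$ is $\R$-linearly injective on $T_e H_{2n+1}$, the generalised Killing equation forces $A(E_k),A(F_k)\in\Span_\R(E_k,F_k)$ and $A(Z)\in\R\cdot Z$. Writing $\mu_\epsilon=i\sigma_\epsilon$ with $\sigma_\epsilon\in\{\pm 1\}$, using the identity $F_k\cdot\psi_\epsilon=-i\epsilon_k\, E_k\cdot\psi_\epsilon$ (a consequence of $E_k\cdot E_k=-1$), and imposing $g(A(E_k),F_k)=g(A(F_k),E_k)$, the resulting $2\times 2$ system per $k$ together with the scalar equation for $Z$ admits a unique real solution: each $(E_k,F_k)$-block of $A$ is the scalar matrix $\sigma_\epsilon\tfrac{\lambda_k\epsilon_k}{4}\Id$, and $A(Z)=-\sigma_\epsilon\tfrac{1}{4}\big(\sum_j\lambda_j\epsilon_j\big)Z$. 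Taking $\epsilon=(-1,\dots,-1)$ and parameters realising $\lambda_k=k$ (e.g.\ $a_k=b_k=k^{-1/2}$, $c=1$), $A$ has eigenvalues $-\sigma_\epsilon k/4$ for $k=1,\dots,n$ (each with multiplicity two) together with $\sigma_\epsilon\tfrac{n(n+1)}{8}$, which are $n+1$ pairwise distinct real numbers.

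The main obstacle is the algebraic bookkeeping in the previous step: one must verify that the off-diagonal pieces of $A$ between different $(E_k,F_k)$-blocks, and between these and $Z$, genuinely vanish, and that the apparently complex $2\times 2$ block equations admit real solutions---equivalently, that the factors of $i$ coming from $\mu_\epsilon$, from the eigenvalues $i\epsilon_k$, and from the ratio $a_k/b_k$ conspire consistently to deliver a real symmetric $A$. Once this is handled, engineering $n+1$ distinct eigenvalues reduces to choosing the $\lambda_k$'s distinct (and positive, so that the $Z$-eigenvalue has sign opposite to all the $(E_k,F_k)$-block ones).
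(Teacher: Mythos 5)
Your proposal is correct and follows essentially the same route as the paper: the same family of diagonal left-invariant metrics, the same Koszul/Nomizu computation of the spin connection, and evaluation on an invariant joint eigenspinor of the commuting operators $E_k \cdot F_k$ (the paper's spinor $1 \in \Lambda^{\bullet} L'$ is exactly your $\psi_{(+1,\dots,+1)}$, and yields the same diagonal $A$ with eigenvalues $\lambda_p$ and $-\sum_q \lambda_q$). The only cosmetic differences are that you work with an abstract weight basis rather than the explicit exterior-algebra model of $\Sigma$, and you realise distinctness via $\lambda_k = k$ where the paper takes $a_p = p^2$, $b_p = c = 1$.
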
 

These are the first examples of invariant generalised Killing spinors on a homogeneous manifold with more than four distinct eigenvalues. 

\begin{table}[h!]
    \begin{center}
\begin{tabular}{@{\hspace{10mm}}ccc@{\hspace{10mm}}}
    \toprule
     $\mathfrak{g}$ & $\dim\left(\Sigma_{\text{inv}}^{GK}\right)$ & $r$ \\
 \toprule
 $\mathrm{Bian}(\text{I})$ & $2$ & $1$ \\
 \midrule
 $\mathrm{Bian}(\text{II})$ & $2$ & $2$ \\
 \midrule
 $\mathrm{Bian}(\text{III})$, $\mathrm{Bian}(\text{IV})$, $\mathrm{Bian}(\text{V})$ & $0$ & $-$   \\   
 \midrule
 \multirow{2}{*}{$\mathrm{Bian}(\text{VI}_{x})$} & $2$, if $x = -1$ & $3$ \\
 & $0$, if $x \neq -1$ & $-$ \\
 \midrule
 \multirow{2}{*}{$\mathrm{Bian}(\text{VII}_{y})$} & $2$, if $y = 0$ & $3$ \\
 & $0$, if $y \neq 0$ & $-$ \\
 \midrule
 $\mathrm{Bian}(\text{VIII})$, $\mathrm{Bian}(\text{IX})$ & $2$ & $3$ \\
 \bottomrule
 \end{tabular}
 \caption{Maximum (and generic) number of distinct eigenvalues, denoted by $r$, of invariant generalised Killing spinors on a connected three-dimensional Lie group with Lie algebra $\mathfrak{g}$ equipped with \textit{any} orientation and  \textit{any} left-invariant metric. $\Sigma_{\text{inv}}^{GK}$ denotes the complex dimension of the vector space of invariant generalised Killing spinors. }
 \label{table:3-dim}
    \end{center}
\end{table}

\FloatBarrier

\section{Preliminaries}\label{section:preliminaries} 

\subsection{Differential forms approach to the spin representation} \label{sec:diff_forms_approach}

For a more general introduction to this approach to the spin representation, see \cite{AHL}. Let $n\in\mathbb{N}$, and let $G$ be a connected $(2n+1)$-dimensional Lie group. Left-invariant Riemannian metrics on $G$ are in bijective correspondence with inner products on its Lie algebra $\mathfrak{g}$. Let $B$ be such an inner product, and let $g$ be the corresponding metric on $G$. Let $(e_1,\dots,e_{2n+1})$ be a $B$-orthonormal basis of $\mathfrak{g}$. 

First, consider the complexification $\mathfrak{g}^{\C}$ of the real vector space $\mathfrak{g}$. It is clear that 
\[ 
    \mathfrak{g}^{\C} = \vecspan_{\C} \left\{ e_1 \right\} \oplus L \oplus L'  \, , 
\]
where 
\[  
    L = \vecspan_{\C} \left\{ x_p = \frac{1}{\sqrt{2}} \left( e_{2p} - i e_{2p+1} \right) \right\}_{p=1 , \dots , n}\,  , \quad L' = \vecspan_{\C} \left\{ y_p = \frac{1}{\sqrt{2}} \left( e_{2p} + i e_{2p+1} \right) \right\}_{p=1 , \dots , n} \, . 
\]
Let 
\[ 
    \Sigma = \Lambda^{\bullet} L' \, . 
\]
Note that we can let $\mathfrak{g}^{\C}$ act on $\Sigma$ by extending the following: for each $\eta \in \Sigma$ and $1 \leq p \leq n$, 
\[ 
    e_1 \cdot \eta = i\left( \eta_{\text{even}} - \eta_{\text{odd}} \right) \, , \qquad x_p \cdot \eta = i \sqrt{2} x_p \lrcorner \eta \, , \qquad y_p \cdot \eta = i \sqrt{2} y_p \wedge \eta \, . 
\] 
This action of $\C^n \cong \mathfrak{g}^{\C}$ on $\Sigma$ extends to a non-trivial action of the complex Clifford algebra $\C\rm{l}(2n+1)$ on $\Sigma$. And the dimension of $\Sigma$ is $2^n$. Hence, the restriction of this action to $\Spin(2n+1)$ is isomorphic to the spin representation. Note that, for each $1 \leq p \leq n$ and $\eta \in \Sigma$, 
\[ 
    \qquad e_1 \cdot \eta = i\left( \eta_{\text{even}} - \eta_{\text{odd}} \right) \, , \qquad e_{2p} \cdot \eta = i \left( x_p \lrcorner \eta + y_p \wedge \eta \right) \, , \qquad e_{2p+1} \cdot \eta = \left( y_p \wedge \eta - x_p \lrcorner \eta \right) \, . 
\] 

$G$ has a unique $G$-invariant spin structure in the sense of \cite{DKL}. Its associated spinor bundle is given by the trivial bundle 
\[ 
    G \times \Sigma \, ,  
\] 
and sections of this bundle correspond to smooth maps $G \to \Sigma$. $G$ acts on the space of spinors as follows: for $x \in G$ and a spinor $\psi$, 
\[ 
    (x \cdot \psi)(y) = \psi(x^{-1} \cdot y) . 
\] 
A spinor $\psi$ is said to be invariant if it is stabilised by all elements of $G$, \textit{i.e.}, if $\psi$ is a constant map. Hence, elements of $\Sigma$ correspond to invariant spinors.

\subsection{Invariant connections and Nomizu maps}

For an extensive treatment of this topic, see \cite{ANT}. We will only need the following result, which lets us translate the generalised Killing equation \ref{eq:genkil} into an algebraic equation.   

\begin{lemma} \label{lemma:nomizu}
Let $m \in \mathbb{N}$ and let $G$ be a connected $m$-dimensional Lie group. Let $B$ be an inner product on its Lie algebra $\mathfrak{g}$, let $g$ be the corresponding left-invariant metric on $G$, and let $\nabla^g$ be the Levi-Civita connection of $g$. Define the Nomizu map of $\nabla^g$ to be the unique linear map $\Uplambda \colon \mathfrak{g} \to \mathfrak{so}(m)$ satisfying 
\[ 
\Uplambda(X)(Y) = \frac{1}{2} \left[ X,Y \right] + U(X,Y) \, , 
\] 
where 
\[ 
B \left( U(X,Y) , W \right) = \frac{1}{2} \left( B \left( \left[W,X\right] , Y \right) + B \left( X , \left[W,Y\right] \right)  \right) \, . 
\]
Let $\widetilde{\Uplambda} = f \circ \Uplambda$, where 
\begin{align*} 
f \colon &\mathfrak{so}(m) \overset{\cong}{\longrightarrow} \mathfrak{spin}(m) \\
&e_i \wedge e_j \mapsto \frac{1}{2} e_i \cdot e_j \, , 
\end{align*}
and $e_i \wedge e_j$ represents the element of $\mathfrak{so}(m)$ that maps $e_i$ to $e_j$, $e_j$ to $-e_i$ and $e_k$ to 0 for all $k \neq i,j$. Let $\psi \in \Sigma$ be an invariant spinor. Then, for each $X \in \mathfrak{g}$, 
\[ 
    \widetilde{\nabla}^{g}_{\hat{X}} \psi = \widetilde{\Uplambda}(X) \cdot \psi \, , 
\]
where $\hat{X}$ is the left-invariant vector field determined by $X$, $\cdot$ denotes the differential of the spin representation introduced in \ref{sec:diff_forms_approach}. \qed 
\end{lemma}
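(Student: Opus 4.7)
The plan is to reduce the statement to two standard facts: (i) the Koszul formula identifies $\Uplambda$ as the Nomizu map in the strong sense that $\nabla^{g}_{\hat{X}} \hat{Y} = \widehat{\Uplambda(X)(Y)}$ for all $X, Y \in \mathfrak{g}$; and (ii) the usual local formula for the spin connection in an orthonormal frame, evaluated on a constant $\Sigma$-valued map, recovers exactly $f \circ \Uplambda$. Together these two pieces make the identity purely algebraic, so no analysis on $G$ enters beyond noting that an invariant spinor is killed by left-invariant vector fields.

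For (i), I would apply Koszul's formula to the triple $(\hat{X}, \hat{Y}, \hat{Z})$ of left-invariant vector fields. Since $g(\hat{Y}, \hat{Z}) = B(Y, Z)$ is a constant function on $G$, the three terms of type $\hat{X} g(\hat{Y}, \hat{Z})$ drop out, leaving
\begin{equation*}
    2 B\bigl((\nabla^{g}_{\hat{X}} \hat{Y})_e, Z\bigr) \;=\; B([X,Y], Z) - B([X,Z], Y) - B([Y,Z], X).
\end{equation*}
Rewriting the last two terms as $B([Z,X], Y) + B(X, [Z,Y])$, the right-hand side is $2 B\bigl(\tfrac{1}{2}[X,Y] + U(X,Y), Z\bigr)$, so by non-degeneracy of $B$ the vector $(\nabla^{g}_{\hat{X}} \hat{Y})_e$ agrees with $\Uplambda(X)(Y)$; invariance of the metric then promotes this to an identity of left-invariant vector fields. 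Swapping $Y$ and $Z$ in the displayed formula and adding shows that $B(\Uplambda(X)(Y), Z)$ is antisymmetric in $(Y,Z)$, so $\Uplambda(X) \in \mathfrak{so}(m)$ as required.

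For (ii), in the left-invariant orthonormal frame $(\hat{e}_1, \dots, \hat{e}_m)$ the connection one-forms are the constants $\omega_{ij}(\hat{X}) = B(\Uplambda(X)(e_i), e_j)$, and the standard spin-connection formula reads
\begin{equation*}
    \widetilde{\nabla}^{g}_{\hat{X}} \psi \;=\; \hat{X}(\psi) \;+\; \frac{1}{2} \sum_{i<j} \omega_{ij}(\hat{X}) \, e_i \cdot e_j \cdot \psi.
\end{equation*}
An invariant spinor is by definition a constant map $G \to \Sigma$, so $\hat{X}(\psi) = 0$. Writing $\Uplambda(X) = \sum_{i<j} B(\Uplambda(X)(e_i), e_j) \, e_i \wedge e_j$ (using antisymmetry of $\Uplambda(X)$) and applying $f$, the surviving sum is precisely $f(\Uplambda(X)) \cdot \psi = \widetilde{\Uplambda}(X) \cdot \psi$, which is what we want. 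The only place that requires real care is matching sign conventions between the wedge-to-Clifford map $f$, the coefficient $\tfrac{1}{2}$ (versus $\tfrac{1}{4}$) in the spin-connection formula, and Koszul; I expect this bookkeeping, rather than anything conceptual, to be the main obstacle.
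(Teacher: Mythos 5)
Your argument is correct, and it supplies precisely the standard proof that the paper itself omits: the lemma is stated with an immediate \qed and the reader is referred to \cite{ANT}, so there is no in-paper proof to compare against. Both halves of your reduction check out. The Koszul computation is right: with $g(\hat Y,\hat Z)$ constant the derivative terms vanish, and rewriting $-B([X,Z],Y)-B([Y,Z],X)$ as $B([Z,X],Y)+B(X,[Z,Y])$ recovers $2B(\tfrac12[X,Y]+U(X,Y),Z)$ exactly as in the definition of $U$ (with $W=Z$), and your symmetrisation in $(Y,Z)$ does show $\Uplambda(X)\in\mathfrak{so}(m)$. The spin-connection step is also right: for the invariant (constant) spinor the derivative term drops, and with $\omega_{ij}(\hat X)=B(\Uplambda(X)e_i,e_j)$ the sum $\tfrac12\sum_{i<j}\omega_{ij}(\hat X)\,e_i\cdot e_j$ is exactly $f(\Uplambda(X))$ once $\Uplambda(X)$ is expanded as $\sum_{i<j}B(\Uplambda(X)e_i,e_j)\,e_i\wedge e_j$. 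The one genuine point of care, which you already flag, is the convention identifying $\mathfrak{so}(m)$ with $\Lambda^2\mathbb{R}^m$ (i.e.\ whether $e_i\wedge e_j$ acts as $v\mapsto B(e_i,v)e_j-B(e_j,v)e_i$ or its negative): this fixes the overall sign and must be chosen consistently with the paper's map $f(e_i\wedge e_j)=\tfrac12 e_i\cdot e_j$ and with the $\tfrac14\sum_{i,j}$ versus $\tfrac12\sum_{i<j}$ form of the spinor connection. That is bookkeeping, not a gap, and the same sign issue is noted in the paper's own reference (the footnote about the sign error in \cite{nomizu} corrected in \cite{ANT}).
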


\section{The Heisenberg groups} \label{sec:heisenberg}

In this section we exhibit the first known examples of invariant generalised Killing spinors with an arbitrary number of distinct eigenvalues on a homogeneous manifold. For each $n \in \mathbb{N}$, we will find a left-invariant metric on the $n$-th Heisenberg group $H_{2n+1}$ which carries an invariant generalised Killing spinor with $(n+1)$ distinct eigenvalues. The calculations carried out in this section are crucial for Section \ref{sec:3dim}, as they exemplify the methods required there. 

We start by recalling the definition of the Heisenberg groups. 

\begin{deff}
    Let $n \in \N$. The $n$-th (real) Heisenberg group $H_{2n+1}$ is defined as 
    \[ 
        H_{2n+1} = \left\{ \begin{pmatrix} 1 & v^{T} & x \\ 0 & \Id_n & w \\ 0 & 0 & 1 \end{pmatrix} \in \Mat_{n+2} \left( \R \right) \colon v, w \in \R^n , \, x \in \R \right\} \, ,  
    \]
    where we think of elements $v \in \R^n$ as column vectors and $v^{T}$ denotes the transpose of $v$. 
\end{deff}
With the obvious topology and smooth structure, $H_{2n+1}$ is a Lie group which is diffeomorphic to $\R^{2n+1}$. The Lie algebra of $H_{2n+1}$ is 
\[ 
\mathfrak{h}_{2n+1} = \vecspan_{\R} \left\{ \hat{Z}, \, \hat{E}_p , \, \hat{F}_p \right\}_{p = 1 , \dots , n} \, , 
\]
where 
\[ 
    \hat{Z} = \begin{pmatrix} 1 & 0^{T} & 1 \\ 0 & \Id_n & 0 \\ 0 & 0 & 1 \end{pmatrix} \, , \qquad \hat{E}_p = \begin{pmatrix} 1 & e_p^{T} & 0 \\ 0 & \Id_n & 0 \\ 0 & 0 & 1 \end{pmatrix} \, , \qquad \hat{F}_p = \begin{pmatrix} 1 & 0^{T} & 0 \\ 0 & \Id_n & e_p \\ 0 & 0 & 1 \end{pmatrix} \, 
\] 
and $e_p$ denotes the $p$-th vector of the standard basis of $\R^n$. These generators of $\mathfrak{h}_{2n+1}$ satisfy, for $1 \leq p,q \leq n$, 
\[ 
 \left[\hat{E}_p , \hat{F}_q \right] = \delta_{p,q} \hat{Z}  \, , \qquad \left[\hat{E}_p , \hat{Z} \right] = \left[\hat{F}_p , \hat{Z} \right] = \left[\hat{E}_p , \hat{E}_q \right] = \left[\hat{F}_p , \hat{F}_q \right] = 0 \, ,  
\]
where $\delta$ is the Kronecker delta. Observe that this Lie algebra has one-dimensional centre. We will fix the orientation determined by the ordered basis 
\begin{equation} \label{eq:basis_h} 
    \left( \hat{Z} , \hat{E}_1 , \hat{F}_1 , \dots , \hat{E}_n , \hat{F}_n \right) \, . 
\end{equation} 
Now, suppose that $g$ is a left-invariant metric on $H_{2n+1}$ determined by an inner product $B$ on $\mathfrak{h}_{2n+1}$. Then, by applying the Gram-Schmidt process to \eqref{eq:basis_h}, we obtain a positively oriented $B$-orthonormal basis of $\mathfrak{h}_{2n+1}$ which satisfies the same Lie bracket relations as 
\begin{equation} \label{eq:basis_h_2}
    \left(  Z \coloneqq \hat{Z} \, , E_1 \coloneqq a_1 \hat{E}_1 , \, F_1 \coloneqq  \hat{F}_1 \, , \dots \, , E_n \coloneqq a_n \hat{E}_n , \, F_n \coloneqq \hat{F}_n  \right) \, , 
\end{equation}
for some $(a_1 , \dots , a_n) \in \R_{>0}^n$. This allows us to obtain an expression for the Nomizu map of the Levi-Civita connection of $g$. 

\begin{lemma} \label{lemma:h_nomizu}
The Nomizu map $\Uplambda \colon \mathfrak{h}_{2n+1} \to \mathfrak{so}(\mathfrak{h}_{2n+1})$ of the Levi-Civita connection of $g$ is given by 
\[ \Uplambda \left( Z \right) = -\frac{1}{2} \sum_{p=1}^{n} a_p E_p \wedge F_p\, , \quad \Uplambda \left( F_{p} \right) = -\frac{1}{2} a_p E_p \wedge Z \, , \quad \Uplambda \left( E_{p} \right) = \frac{1}{2} a_p F_p \wedge Z\, . \] 
\end{lemma}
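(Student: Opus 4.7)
The plan is to compute the Nomizu map directly from the Koszul-style formula given in Lemma \ref{lemma:nomizu}, exploiting the fact that the Heisenberg algebra is $2$-step nilpotent with very few nonzero brackets. First I would rescale the Lie bracket relations of $\mathfrak{h}_{2n+1}$ to the $B$-orthonormal frame: writing $\kappa_p \coloneqq \sqrt{c/(a_p b_p)}$, the bracket
\[
[\hat E_p,\hat F_q] = \delta_{pq}\hat Z
\]
becomes $[E_p,F_q] = \delta_{pq}\kappa_p Z$, with every other bracket among $\{Z,E_1,F_1,\dots,E_n,F_n\}$ vanishing (the centrality of $Z$ is what will make the calculation short).

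Next I would evaluate $\Uplambda(X) = \tfrac{1}{2}\mathrm{ad}(X) + U(X,\cdot)$ on each basis vector $Y$, for each choice of $X \in \{Z,E_p,F_p\}$. Since the only bracket with a nonzero component is in the $Z$-direction, the formula
\[
B(U(X,Y),W) = \tfrac{1}{2}\bigl(B([W,X],Y) + B(X,[W,Y])\bigr)
\]
is nonzero only when one of $\{X,Y,W\}$ equals $Z$ and the other two are $E_p,F_p$ for the same $p$. Case-by-case this yields:
\begin{itemize}
\item $\Uplambda(Z)(E_p) = -\tfrac{\kappa_p}{2}F_p$ and $\Uplambda(Z)(F_p) = \tfrac{\kappa_p}{2}E_p$, with $\Uplambda(Z)(Z)=0$ (here $\mathrm{ad}(Z)=0$);
\item $\Uplambda(E_p)(F_p) = \tfrac{\kappa_p}{2}Z$ and $\Uplambda(E_p)(Z) = -\tfrac{\kappa_p}{2}F_p$, with the $U$-term vanishing on $(E_p,F_p)$ because $[W,E_p]$ always lies in $\mathbb{R}Z$;
\item $\Uplambda(F_p)(E_p) = -\tfrac{\kappa_p}{2}Z$ and $\Uplambda(F_p)(Z) = \tfrac{\kappa_p}{2}E_p$, by the analogous computation.
\end{itemize}

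Finally, I would repackage each of these skew-symmetric endomorphisms as a bivector under the isomorphism $\mathfrak{so}(\mathfrak{h}_{2n+1}) \cong \Lambda^{2}\mathfrak{h}_{2n+1}$, where $e_i\wedge e_j$ corresponds to the endomorphism sending $e_i\mapsto e_j$ and $e_j\mapsto -e_i$. Matching the three families above with $-\tfrac{\kappa_p}{2}E_p\wedge F_p$, $\tfrac{\kappa_p}{2}F_p\wedge Z$, and $-\tfrac{\kappa_p}{2}E_p\wedge Z$ respectively gives the claimed formulas for $\Uplambda(Z)$, $\Uplambda(E_p)$, $\Uplambda(F_p)$.

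The computation is essentially routine; the only real pitfall is sign bookkeeping, both in the antisymmetric formula for $U$ and in the $\mathfrak{so}\cong\Lambda^{2}$ convention, so the main care is to fix and apply one convention consistently throughout.
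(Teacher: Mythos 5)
Your proposal is correct and follows essentially the same route as the paper: a direct evaluation of $\Uplambda(X)(Y)=\tfrac{1}{2}[X,Y]+U(X,Y)$ on the rescaled orthonormal frame, with all the signs and coefficients $\kappa_p=\sqrt{c/(a_p b_p)}$ matching the paper's case-by-case computation, followed by the standard identification $\mathfrak{so}\cong\Lambda^2$. The only (harmless) difference is that you shortcut the exhaustive case analysis by observing structurally which terms of $U$ can be nonzero, whereas the paper writes out each evaluation explicitly.
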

\begin{proof} 
Recall from \cite{nomizu,ANT}\footnote{There is a sign mistake in \cite{nomizu}, which is fixed in \cite{ANT}. } that the Nomizu map $\Uplambda$ of the Levi-Civita connection of the invariant metric $g$ on $H_{2n+1}$ induced by the inner product $B$ on $\mathfrak{h}_{2n+1}$ satisfies, for every $X,Y \in \mathfrak{h}_{2n+1}$, 
\[ 
\Uplambda(X)(Y) = \frac{1}{2} \left[ X,Y \right] + U(X,Y) \, , 
\] 
where the bilinear map $U$ is determined by imposing, for all $W \in \mathfrak{h}_{2n+1}$, 
\[ 
B \left( U(X,Y) , W \right) = \frac{1}{2} \left( B \left( \left[W,X\right] , Y \right) + B \left( X , \left[W,Y\right] \right)  \right) \, . 
\]
Fix $1 \leq p \leq n$. Let us first compute $\Uplambda \left( E_{p} \right)$. Note that, for $1 \leq q \leq n$,  
\begin{align*} 
    \Uplambda \left( E_{p} \right) (E_q) &= \frac{1}{2} \left[ E_p,E_q \right] + U(E_p,E_q) = 0 \, , \\
    \Uplambda \left( E_{p} \right) (F_q) &= \frac{1}{2} \left[ E_p,F_q \right] + U(E_p,F_q) = \frac{1}{2} \left[ E_p,F_q \right] = \\ 
     &=\frac{1}{2} a_p \left[ \hat{E}_p , \hat{F}_q \right] = \delta_{p,q}\frac{1}{2} a_p Z \, , \\
     \Uplambda \left( E_{p} \right) (Z) &= \frac{1}{2} \left[ E_p,Z \right] + U(E_p,Z) = U(E_p,Z) = \\
     &= \frac{1}{2} B \left(  \left[F_p,E_p\right] , Z \right) F_p = -\frac{1}{2} a_p F_p \, . 
\end{align*}
This proves that $\Uplambda \left( E_{p} \right)$ is as claimed. A similar computation yields that $\Uplambda \left( F_{p} \right)$ is as claimed. Finally, let us compute $\Uplambda \left( Z \right)$. For each $1 \leq p \leq n$, we obtain  
\begin{align*} 
    \Uplambda \left( Z \right) (E_p) &= \frac{1}{2} \left[ Z,E_p \right] + U(Z,E_p) = U(Z,E_p) =  \\
    &= \frac{1}{2} B \left( Z , \left[F_p,E_p\right] \right) F_p = - \frac{1}{2} a_p F_p  \, , \\
    \Uplambda \left( Z \right) (F_p) &= \frac{1}{2} \left[ Z,F_p \right] + U(Z,F_p) = U(Z,F_p) =  \\
    &= \frac{1}{2} B \left( Z , \left[E_p,F_p\right] \right) E_p = \frac{1}{2} a_p E_p \, , \\
    \Uplambda \left( Z \right) (Z) &= \frac{1}{2} \left[ Z,Z \right] + U(Z,Z) = 0 \, . \\
\end{align*}
This concludes the proof. 
\end{proof}

Recalling the notation of Section \ref{sec:diff_forms_approach}, we can now state the following result:  

\begin{thmm}\label{thmm:main}
The invariant spinor on $\left(H_{2n+1},g \right)$ determined by $1 \in \Sigma$ is a generalised Killing spinor with the following set of eigenvalues: 
\[ 
\left\{ \lambda_p = \frac{1}{4} a_p \, , \,  \mu = - \sum_{q=1}^{n} \lambda_q \right\}_{p=1 , \dots , n} \, . 
\]
In particular, choosing $a_p \coloneqq p$, there are $n+1$ distinct constant eigenvalues. 
\end{thmm}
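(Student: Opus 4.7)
The plan is to reduce the generalised Killing condition to an algebraic identity on $\Sigma$ via Lemma \ref{lemma:nomizu}, and then verify it on generators. For the invariant spinor $\psi = 1 \in \Sigma$, equation \eqref{eq:genkil} demands a $B$-symmetric $A \in \End(\mathfrak{h}_{2n+1})$ with $\widetilde{\Uplambda}(X)\cdot 1 = A(X)\cdot 1$ for every $X \in \mathfrak{h}_{2n+1}$, where $\widetilde{\Uplambda} = f\circ\Uplambda$ is read off from Lemma \ref{lemma:h_nomizu}.

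First, I would compute the three elementary Clifford actions on the vacuum $1 \in \Sigma = \Lambda^\bullet L'$ using Section \ref{sec:diff_forms_approach}: with $Z = e_1$, $E_p = e_{2p}$ and $F_p = e_{2p+1}$, these give $Z \cdot 1 = i$, $E_p \cdot 1 = i y_p$ and $F_p\cdot 1 = y_p$. Iterating these, and using $x_p\lrcorner y_p = 1$ (the pairing induced by the complex bilinear extension of $B$), one obtains
\[
    F_p\cdot Z\cdot 1 = i y_p, \qquad E_p\cdot Z\cdot 1 = -y_p, \qquad E_p\cdot F_p\cdot 1 = i.
\]

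Next, applying $f$ to Lemma \ref{lemma:h_nomizu} yields $\widetilde{\Uplambda}(E_p) = \lambda_p\, F_p\cdot Z$, $\widetilde{\Uplambda}(F_p) = -\lambda_p\, E_p\cdot Z$, and $\widetilde{\Uplambda}(Z) = -\sum_p \lambda_p\, E_p\cdot F_p$. Substituting the identities above, I expect $\widetilde{\Uplambda}(E_p)\cdot 1 = \lambda_p\, E_p\cdot 1$, $\widetilde{\Uplambda}(F_p)\cdot 1 = \lambda_p\, F_p\cdot 1$ and $\widetilde{\Uplambda}(Z)\cdot 1 = \mu\, Z\cdot 1$ with $\mu = -\sum_p \lambda_p$. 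Hence the operator $A$ that is diagonal in the orthonormal basis $(Z, E_1, F_1, \ldots, E_n, F_n)$ with eigenvalues $\mu, \lambda_1, \lambda_1, \ldots, \lambda_n, \lambda_n$ solves the equation and is automatically $B$-symmetric. Uniqueness of these eigenvalues follows from the $\mathbb{R}$-linear independence of $\{i, iy_1, y_1, \ldots, iy_n, y_n\}$ in $\Sigma$, i.e.\ from the injectivity of $X\mapsto X\cdot 1$ on the real Lie algebra $\mathfrak{h}_{2n+1}$.

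For the final assertion, the choice $b_p = c = 1$, $a_p = p^2$ gives $\lambda_p = 1/(4p)$: these are pairwise distinct and strictly positive, while $\mu = -\tfrac{1}{4}\sum_{q=1}^n 1/q < 0$, so all $n+1$ eigenvalues are distinct. I do not foresee a serious obstacle; the only delicate point is the bookkeeping of Clifford relations, in particular pinning down the sign and factor of $i$ in $E_p\cdot F_p\cdot 1$, after which the verification is essentially mechanical.
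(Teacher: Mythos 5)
Your proposal is correct and follows essentially the same route as the paper: compute the spin lift of the Nomizu map from Lemma \ref{lemma:h_nomizu}, act on the vacuum $1\in\Sigma$ using $Z\cdot 1 = i$, $E_p\cdot 1 = iy_p$, $F_p\cdot 1 = y_p$, and read off the diagonal (hence symmetric) endomorphism $A$ with eigenvalues $\mu,\lambda_1,\lambda_1,\dots,\lambda_n,\lambda_n$. Your additional remarks on the injectivity of $X\mapsto X\cdot 1$ and the explicit distinctness check for $a_p=p^2$ are correct refinements of what the paper leaves implicit.
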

\begin{proof}
For $X \in \mathfrak{h}_{2n+1}$, let $\widetilde{\Uplambda}(X) \in \mathfrak{spin}(2n+1)$ be the image of $\Uplambda(X) \in \mathfrak{so}(2n+1)$ under the isomorphism 
\[ 
    e_i \wedge e_j \mapsto \frac{1}{2} e_i \cdot e_j \, . 
\] 
We will check that, for all $1 \leq p \leq n$,  
\[ \widetilde{\Uplambda}\left( E_p \right) \cdot 1 = \lambda_p E_p \cdot 1 \, , \quad \widetilde{\Uplambda}\left( F_p \right) \cdot 1 = \lambda_p F_p \cdot 1 \, , \quad \widetilde{\Uplambda}\left( Z \right) \cdot 1 = \mu Z \cdot 1 \, . \] 
From Lemma \ref{lemma:h_nomizu}, we get that the spin lift $\widetilde{\Uplambda}$ of the Nomizu map $\Uplambda$ is given by 
\[\widetilde{\Uplambda} \left( E_{p} \right) = \frac{1}{4} a_p F_p \cdot Z \, , \quad \widetilde{\Uplambda} \left( F_{p} \right) = -\frac{1}{4} a_p E_p \cdot Z \, , \quad \widetilde{\Uplambda} \left( Z \right) = -\frac{1}{4} \sum_{p=1}^{n} a_p E_p \cdot F_p \, . \] 
Hence, applying these operators to the spinor $1 \in \Sigma$, we obtain 
\begin{align*}
    & \widetilde{\Uplambda} \left( E_{p} \right) \cdot 1 = \frac{1}{4} a_p F_p \cdot Z \cdot 1 = \frac{i}{4} a_p F_p \cdot 1 = \frac{i}{4} a_p y_p \, , \\
    & \widetilde{\Uplambda} \left( F_{p} \right) \cdot 1 = -\frac{1}{4} a_p E_p \cdot Z \cdot 1 = -\frac{i}{4} a_p E_p \cdot 1 = \frac{1}{4} a_p y_p  \, , \\
    & \widetilde{\Uplambda} \left( Z \right) \cdot 1 = -\frac{1}{4} \sum_{p=1}^{n} a_p E_p \cdot F_p \cdot 1 = -\frac{i}{4} \sum_{p=1}^{n} a_p \cdot 1 \, . 
    \end{align*}
And because  
\[ 
    E_p \cdot 1 =  i y_p \, , \qquad F_p \cdot 1 = y_p \, , \qquad Z \cdot 1 = i 1 \, , 
\] 
we conclude. 
\end{proof} 

\section{Three-dimensional Lie groups}\label{sec:3dim}

In this section, we study the existence of left-invariant metrics on three-dimensional Lie groups which admit invariant generalised Killing spinors. We use Bianchi's classification of three-dimensional real Lie algebras \cite{bianchi,M63} --see Appendix \ref{appendix}. 

For a general spin manifold, the set of generalised Killing spinors does not have a good algebraic structure. It is clear that this set is closed under multiplication by scalars in $\C$, but in general it is not closed under addition. Not even the set of \textit{invariant} generalised Killing spinors on a homogeneous space can be endowed with a good structure. However, in the case of connected three-dimensional Lie groups, this set is a complex vector space, as we show in the following lemma. 

\begin{lemma} \label{lemma:allornothing}
Let $G$ be a connected three-dimensional Lie group with Lie algebra $\mathfrak{g}$, and let $g$ be a left-invariant metric on $G$ carrying a non-zero invariant generalised Killing spinor with associated endomorphism $A$. Then, every invariant spinor is generalised Killing with endomorphism $A$. 
\end{lemma}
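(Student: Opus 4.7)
The plan is to reduce the statement to the purely algebraic identity that Lemma \ref{lemma:nomizu} extracts from the generalised Killing equation for invariant spinors. Namely, for each $X \in \mathfrak{g}$ I will consider
\[ M(X) := \widetilde{\Uplambda}(X) - A(X) \in \End_{\mathbb{C}}(\Sigma), \]
where both terms act on the $2$-dimensional complex spinor module $\Sigma$. The hypothesis then reads $M(X)\psi_0 = 0$ for every $X$, and the lemma will follow as soon as I show that $M(X) \equiv 0$ on $\Sigma$, because then every invariant $\psi \in \Sigma$ lies in $\ker M(X)$ and Lemma \ref{lemma:nomizu} rewrites this as the generalised Killing equation for $\psi$ with endomorphism $A$.

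The first step is to locate $M(X)$ inside the compact Lie algebra $\mathfrak{su}(\Sigma) \cong \mathfrak{su}(2)$ associated to the natural spin-invariant Hermitian inner product on $\Sigma$. The piece $\widetilde{\Uplambda}(X) \in \mathfrak{spin}(3)$ acts as a skew-Hermitian traceless operator because the spin representation of $\Spin(3)$ is unitary. The piece $A(X) \in \mathfrak{g}$ is a \emph{real} vector (since $A$ is $g$-symmetric on the real tangent bundle), and from the explicit formulas of Section \ref{sec:diff_forms_approach} the basis vectors $e_1, e_2, e_3$ act on $(1, y_1)$ as matrices of the form $\pm i \sigma_k$, so Clifford multiplication by a real vector is also skew-Hermitian and traceless. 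Hence $M(X) \in \mathfrak{su}(\Sigma)$.

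The second step is the elementary algebraic fact that drives the whole argument: a non-zero element of $\mathfrak{su}(2)$ acts on $\mathbb{C}^2$ with trivial kernel. Indeed, any $N \in \mathfrak{su}(2)$ is unitarily diagonalisable with purely imaginary eigenvalues summing to zero, so the eigenvalues are $\pm i\mu$ for some $\mu \in \mathbb{R}$; either $\mu = 0$ (and $N = 0$) or $\mu \neq 0$ (and $N$ is invertible, with determinant $\mu^2$). Applied to $N = M(X)$, the relation $M(X)\psi_0 = 0$ with $\psi_0 \neq 0$ forces $M(X) = 0$ for every $X$, which is exactly the identity we need.

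I do not expect a substantial technical obstacle; the only point that needs a little care is confirming that a single Hermitian structure on $\Sigma$ simultaneously turns the spin representation and Clifford multiplication by real vectors into skew-Hermitian operators, and this is immediate from the matrices in Section \ref{sec:diff_forms_approach}. The more interesting aspect is conceptual: the argument is entirely specific to dimension $3$, since once $\dim_{\mathbb{C}}\Sigma \geq 4$ the algebra $\mathfrak{su}(\Sigma)$ contains plenty of non-zero elements with non-trivial kernel, and this ``all or nothing'' phenomenon cannot be expected to hold in the same way.
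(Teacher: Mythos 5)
Your proof is correct, but it takes a genuinely different route from the paper's. The paper argues by \emph{explicitly constructing} a (not a priori symmetric) endomorphism $B \in \End(\mathfrak{g})$ with $\widetilde{\Uplambda}(X)\cdot\phi = B(X)\cdot\phi$ for \emph{every} invariant spinor $\phi$: writing $\widetilde{\Uplambda}(X)$ in the bivector basis $e_1e_2, e_1e_3, e_2e_3$ and computing its action on the basis $(1,y_1)$ of $\Sigma$, one sees that each bivector acts exactly as Clifford multiplication by a vector (the dimension-$3$ volume-element phenomenon), which yields $B(X) = \lambda(X)e_3 - \mu(X)e_2 + \nu(X)e_1$; the nonzero spinor $\psi$ then forces $B = A$. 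You instead avoid constructing $B$ altogether: you place the difference operator $M(X) = \widetilde{\Uplambda}(X) - A(X)\cdot$ inside $\mathfrak{su}(\Sigma)\cong\mathfrak{su}(2)$ (both pieces are traceless and skew-Hermitian for the standard Hermitian structure in which $(1,y_1)$ is orthonormal) and invoke the fact that a nonzero element of $\mathfrak{su}(2)$ has eigenvalues $\pm i\mu$ with $\mu\neq 0$ and hence trivial kernel, so $M(X)\psi = 0$ with $\psi\neq 0$ kills $M(X)$. Both arguments are specific to dimension $3$, and your diagnosis of why the statement should fail for $\dim_{\mathbb C}\Sigma \geq 4$ is apt. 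What the paper's computation buys, and yours does not, is the auxiliary endomorphism $B$ defined for an \emph{arbitrary} left-invariant metric (with no generalised Killing spinor assumed); this object is explicitly reused in the proof of Theorem \ref{thm:3dim} and in the derivation of \eqref{eq:explicit_A}, whereas your argument presupposes the existence of $A$ and so only delivers the lemma itself. What your argument buys is brevity and a structural explanation of the ``all or nothing'' dichotomy that the explicit computation leaves somewhat mysterious.
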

\begin{proof}
Let $\left(e_1,e_2,e_3\right)$ be a $g$-orthonormal basis of $\mathfrak{g}$. Let $\widetilde{\Uplambda}$ be the lift to $\mathfrak{spin}(3)$ of the Nomizu map of the Levi-Civita connection of $g$. There exist linear forms
\[ \lambda,\mu,\nu \colon \mathfrak{g} \to \mathbb{R} \]
such that, for every $X\in\mathfrak{g}$,  
\[
\widetilde{\Uplambda}(X) = \lambda(X) e_1 e_2 + \mu(X) e_1 e_3 + \nu(X) e_2 e_3 \, , 
\]
and hence
\begin{align*}
\widetilde{\Uplambda}(X) \cdot 1 &= \lambda(X) e_1 e_2 \cdot 1 + \mu(X) e_1 e_3 \cdot 1 + \nu(X) e_2 e_3 \cdot 1 \\ 
&= i \lambda(X) e_1 \cdot \left( y_1 \right) + \mu(X) e_1 \cdot \left( y_1 \right) + \nu(X) e_2 \cdot \left( y_1 \right) \\
&= \lambda(X) y_1 - i \mu(X) y_1 + i \nu(X) 1 \\
&= \lambda(X) e_3 \cdot 1 - \mu(X) e_2 \cdot 1 + \nu(X) e_1 \cdot 1 \\
&= \left( \lambda(X) e_3 - \mu(X) e_2 + \nu(X) e_1 \right) \cdot 1 \, . 
\end{align*}
Similarly, 
\begin{align*}
    \widetilde{\Uplambda}(X) \cdot y_1 &= \lambda(X) e_1 e_2 \cdot y_1 + \mu(X) e_1 e_3 \cdot y_1 + \nu(X) e_2 e_3 \cdot y_1 \\ 
    &= i \lambda(X) e_1 \cdot \left( 1 \right) - \mu(X) e_1 \cdot \left( 1\right) - \nu(X) e_2 \cdot \left( 1 \right) \\
    &= -\lambda(X) 1 - i \mu(X) 1 - i \nu(X) y_1 \\
    &= \lambda(X) e_3 \cdot y_1 - \mu(X) e_2 \cdot y_1 + \nu(X) e_1 \cdot y_1 \\
    &= \left( \lambda(X) e_3 - \mu(X) e_2 + \nu(X) e_1 \right) \cdot y_1 \, . 
\end{align*}
Define $B \in \End(\mathfrak{g})$ by 
\[ X \mapsto \lambda(X) e_3 - \mu(X) e_2 + \nu(X) e_1 \, . \]
It is clear that, for each $X \in \mathfrak{g}$,  
and every invariant spinor $\phi \in \Sigma$, 
\begin{equation} \label{eq:allgks}
     \widetilde{\Uplambda}(X) \cdot \phi = B(X) \cdot \phi \, .
\end{equation}
But, by hypothesis, there exists a non-zero invariant generalised Killing spinor $\psi$ with associated symmetric endomorphism $A \in \End(\mathfrak{g})$. Hence, for every $X \in \mathfrak{g}$, 
\[ A(X) \cdot \psi = \widetilde{\Uplambda}(X) \cdot \psi = B(X) \cdot \psi \, .  \]
As $\psi \neq 0$, this implies that $A=B$, and hence $B$ is symmetric. So, by \eqref{eq:allgks}, every invariant spinor is generalised Killing with endomorphism $A$. 
\end{proof}

We can now state the main result of this section, which gives a complete picture of the set of invariant generalised Killing spinors on connected three-dimensional Lie groups. 

\begin{thmm}\label{thm:3dim}
Let $G$ be a connected three-dimensional Lie group with Lie algebra $\mathfrak{g}$. For any choice of orientation and left-invariant metric on $G$, the following holds: 
\begin{enumerate}
    \item If $\mathfrak{g} = \mathrm{Bian}(\text{I})$, then every left-invariant metric on $G$ carries a two-dimensional space of invariant generalised Killing spinors with one eigenvalue (in fact, the spinors are parallel). 
    \item If $\mathfrak{g} = \mathrm{Bian}(\text{II})$, then every left-invariant metric on $G$ carries a two-dimensional space of invariant generalised Killing spinors with two distinct eigenvalues. 
    \item If $\mathfrak{g} = \mathrm{Bian}(\text{III})$, $\mathrm{Bian}(\text{IV})$, $\mathrm{Bian}(\text{V})$, $\mathrm{Bian}(\text{VI}_{x})$ for some $0 < \abs{x} < 1$ or $\mathrm{Bian}(\text{VII}_{y})$ for some $y > 0$, then no left-invariant metric on $G$ carries non-trivial invariant generalised Killing spinors. 
    \item If $\mathfrak{g} = \mathrm{Bian}(\text{VI}_{-1})$, then every left-invariant metric on $G$ carries a two-dimensional space of invariant generalised Killing spinors with three distinct eigenvalues.
    \item If $\mathfrak{g} = \mathrm{Bian}(\text{VII}_0)$, $\mathrm{Bian}(\text{VIII})$ or $\mathrm{Bian}(\text{IX})$, then every left-invariant metric on $G$ carries a two-dimensional space of invariant generalised Killing spinors. Moreover, the set of left-invariant metrics for which the number of distinct eigenvalues of these spinors is strictly less than $3$ has measure zero in the set of all left-invariant metrics on $G$. 
\end{enumerate}
This is summarised in Table \ref{table:3-dim}. 
\end{thmm} 

\begin{proof} 
    The strategy in every case will be the following. We start by fixing a basis $(f_1,f_2,f_3)$ of $\mathfrak{g}$ for which the structure constants take the form given in Appendix \ref{appendix}. We fix the orientation defined by this ordered basis (a different choice of orientation does not affect the subsequent results). Then, we consider an invertible matrix of the form 
    \[ 
        P = \begin{pmatrix}
            \alpha & \beta  & \gamma \\
             0 & \varepsilon & \zeta \\
             0 & 0 & \iota \\
        \end{pmatrix} \in \GL^{+}(3,\mathbb{R}) \, , 
    \]
    and define $B_P$ to be the unique inner product on $\mathfrak{g}$ such that the basis $\left(e_1,e_2,e_3\right)$ defined by 
    \[
    e_1 = \alpha f_1 \, , \quad e_2 = \beta f_1 + \varepsilon f_2 \, , \quad \text {and} \quad e_3 = \gamma f_1 + \zeta f_2 + \iota f_3  
    \]
    is (positively oriented and) $B_P$-orthonormal. Note that this covers all possible inner products on $\mathfrak{g}$. Then, we compute the endomorphism $A$ of $\mathfrak{g}$ such that, for every $X \in \mathfrak{g}$, 
    \begin{equation} \label{eq:genkil3dim} 
    \widetilde{\Uplambda}(X) \cdot 1 = A(X) \cdot 1 \, . 
    \end{equation}
    This endomorphism exists by the proof of Lemma \ref{lemma:allornothing}, and it is obtained after a computation analogous to the one done in detail in Section \ref{sec:heisenberg}. We give a general expression later, in equation \eqref{eq:explicit_A}. The explicit results in terms of $P$ are collected in Appendix \ref{appendix}. 
    
    Finally, by Lemma \ref{lemma:allornothing}, there are only two cases: if $A$ is symmetric, then all invariant spinors are generalised Killing with endomorphism $A$; if not, the only invariant generalised Killing spinor is $0$. In Table \ref{table:symmetric}, we show the value of $A-A^T$ in each case --see also Remark \ref{rem:symmetry}. 

    \begin{table}[h!]
        \centering
        \begin{tabular}{cc}
            \toprule 
            $\mathfrak{g}$ & $A - A^T$ \\
            \toprule
            $\mathrm{Bian}(\text{I})$, $\mathrm{Bian}(\text{II})$, $\mathrm{Bian}(\text{VIII})$, $\mathrm{Bian}(\text{IX})$  & $\begin{pmatrix} 0 & 0 & 0 \\ 0 & 0 & 0 \\ 0 & 0 & 0 \end{pmatrix}$ \\
            \midrule
            $\mathrm{Bian}(\text{III})$ & $ \frac{1}{2}\begin{pmatrix}
                0 & -\zeta  & \varepsilon \\ 
                \zeta & 0 & 0 \\
                -\varepsilon & 0 & 0 \\
                \end{pmatrix}$ \\
            \midrule
            $\mathrm{Bian}(\text{IV})$, $\mathrm{Bian}(\text{V})$ & $\begin{pmatrix}
                0 & -\iota  & 0 \\
                \iota & 0 & 0 \\
                0 & 0 & 0 \\ 
                \end{pmatrix}$ \\
            \midrule
            $\mathrm{Bian}(\text{VI}_{x})$ & $\frac{x+1}{2} \begin{pmatrix}
                0 & -\iota  & 0 \\
                \iota & 0 & 0 \\
                0 & 0 & 0 \\
                \end{pmatrix}$ \\
            \midrule
            $\mathrm{Bian}(\text{VII}_{y})$ & $y \begin{pmatrix}
                0 & -\iota  & 0 \\
                \iota & 0 & 0 \\
                0 & 0 & 0 \\
                \end{pmatrix}$ \\
            \bottomrule
        \end{tabular}
        \caption{For each Lie algebra $\mathfrak{g}$, value of $A-A^T$ in the basis $(e_1,e_2,e_3)$.}
        \label{table:symmetric}
    \end{table}
\end{proof}

\begin{rem}\label{rem:symmetry} It is very surprising that the symmetry of $A$ is completely independent of the choice of left-invariant metric -- see Table \ref{table:symmetric}. 
\end{rem} 

In fact, we can obtain a general expression for the endomorphism $A$ as follows. Fix a connected three-dimensional Lie group $G$, and fix an orientation and an inner product $B$ on its Lie algebra $\mathfrak{g}$. Let $(e_1,e_2,e_3)$ be a positively oriented $B$-orthonormal basis of $\mathfrak{g}$. Then, for each $1 \leq i,j \leq 3$, 
\[ \left[ e_i , e_j \right] = \sum_{k=1}^{3} c_{ij}^{k} e_k \, , \] 
for some $c_{ij}^{k} \in \R$. A calculation similar to the one detailed in Section \ref{sec:heisenberg} yields the matrix of $A$ in the basis $(e_1,e_2,e_3)$:  
\begin{equation}\label{eq:explicit_A}
    A = \begin{pmatrix}
    \frac{1}{4} (c_{12}^{3}-c_{13}^{2}-c_{23}^{1}) & -\frac{1}{2} c_{23}^{2} & -\frac{1}{2} c_{23}^{3} \\
    \frac{1}{2} c_{13}^{1} & \frac{1}{4} (c_{12}^{3}+c_{13}^{2}+c_{23}^{1}) & \frac{1}{2} c_{13}^{3} \\
    -\frac{1}{2} c_{12}^{1} & -\frac{1}{2} c_{12}^{2} & \frac{1}{4} (-c_{12}^{3}-c_{13}^{2}+c_{23}^{1}) \\
   \end{pmatrix} \, . 
\end{equation}
This matrix is symmetric if, and only if, 
\[ c_{13}^{1} + c_{23}^{2} = 0 \, , \quad c_{12}^{1} - c_{23}^{3} = 0 \, , \quad \text {and} \quad c_{12}^{2} + c_{13}^{3} = 0 \, . \] 
In this case, every invariant spinor $\psi \in \Sigma$ is a generalised Killing spinor with endomorphism $A$. In particular, every $\psi \in \Sigma$ is an eigenspinor of the Dirac operator with eigenvalue 
\[ \tr(A) = \frac{1}{4} (c_{12}^{3} - c_{13}^{2} + c_{23}^{1}) \, . \] 
The endomorphism $A$ commutes with the Ricci endomorphism, as shown in the following: 
\begin{thmm} \label{thm:ricci_A} 
Let $G$ be a connected three-dimensional Lie group equipped with an orientation and a left-invariant metric $g$ carrying a non-trivial invariant generalised Killing spinor with endomorphism $A$. Then, $A$ commutes with the Ricci endomorphism of $g$.  
\end{thmm}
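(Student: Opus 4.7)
The plan is to identify the symmetry of $A$ with the unimodularity of $\mathfrak{g}$, and then exploit Milnor's canonical orthonormal frame for three-dimensional unimodular metric Lie algebras. Fix a positively oriented orthonormal basis $(e_1,e_2,e_3)$ of $\mathfrak{g}$ and write $[e_i,e_j]=\sum_k c_{ij}^{k}e_k$.

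First, I would show that the symmetry of $A$ is equivalent to the unimodularity of $\mathfrak{g}$. By the remarks immediately following \eqref{eq:explicit_A}, the matrix $A$ is symmetric if and only if
\[ c_{13}^{1}+c_{23}^{2}=0,\quad c_{12}^{1}-c_{23}^{3}=0,\quad c_{12}^{2}+c_{13}^{3}=0. \]
Writing out the matrices of $\ad_{e_1},\ad_{e_2},\ad_{e_3}$ in the basis $(e_1,e_2,e_3)$, one checks directly that these three identities are precisely $\tr(\ad_{e_i})=0$ for $i=1,2,3$, so together they are equivalent to the unimodularity of $\mathfrak{g}$. Since the hypothesis furnishes a non-trivial invariant generalised Killing spinor, and the associated endomorphism is uniquely determined by $\widetilde{\Uplambda}$ through the Clifford action (as in the proof of Lemma \ref{lemma:allornothing}), $A$ must coincide with the symmetric candidate given by \eqref{eq:explicit_A}; consequently $\mathfrak{g}$ is unimodular.

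Next I would invoke Milnor's structure theorem: any three-dimensional unimodular metric Lie algebra admits a positively oriented orthonormal basis $(e_1,e_2,e_3)$ with
\[ [e_2,e_3]=\lambda_1 e_1,\quad [e_3,e_1]=\lambda_2 e_2,\quad [e_1,e_2]=\lambda_3 e_3 \]
for some $\lambda_1,\lambda_2,\lambda_3\in\R$. The only nonzero structure constants in this basis are $c_{23}^{1}=\lambda_1$, $c_{13}^{2}=-\lambda_2$, $c_{12}^{3}=\lambda_3$, so formula \eqref{eq:explicit_A} gives directly that $A$ is diagonal, with entries $\tfrac{1}{4}(-\lambda_1+\lambda_2+\lambda_3)$, $\tfrac{1}{4}(\lambda_1-\lambda_2+\lambda_3)$, $\tfrac{1}{4}(\lambda_1+\lambda_2-\lambda_3)$. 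Milnor also computed that the Ricci endomorphism is diagonal in this very same basis, with eigenvalues $2\mu_2\mu_3$, $2\mu_1\mu_3$, $2\mu_1\mu_2$, where $\mu_i=\tfrac{1}{2}(-\lambda_i+\lambda_j+\lambda_k)$. Two endomorphisms that are diagonal in a common basis commute, so $[A,\Ric]=0$.

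The main obstacle is really just the first step; once the equivalence between $A$-symmetry and unimodularity is pinned down, the rest amounts to assembling two classical structural facts of Milnor. As a consistency check, the five Lie algebras identified in Theorem \ref{thm:3dim} as admitting invariant generalised Killing spinors --namely $L(3,1), L(3,2,-1), L(3,4,0), L(3,5), L(3,6)$-- are precisely the five unimodular types in Bianchi's classification, in perfect agreement with the criterion produced in the first step.
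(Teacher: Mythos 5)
Your proof is correct, but it takes a genuinely different route from the paper. The paper's proof is computational: it uses the spinorial curvature identity to write the matrix of $\Ric$ explicitly in terms of the structure constants (already reduced using the symmetry of $A$), and then verifies $[A,\Ric]=0$ by hand via the Jacobi identity. You instead observe that the symmetry conditions following \eqref{eq:explicit_A} are exactly $\tr(\ad_{e_i})=0$ for $i=1,2,3$, i.e.\ unimodularity of $\mathfrak{g}$, and then invoke Milnor's normal form for three-dimensional unimodular metric Lie algebras, in which both $A$ (by \eqref{eq:explicit_A}) and $\Ric$ (by Milnor's curvature computation) are diagonal. The two hinges of your argument are sound: the identification with unimodularity is a correct reading of the three conditions (and is corroborated by the fact that $L(3,1)$, $L(3,2,-1)$, $L(3,4,0)$, $L(3,5)$, $L(3,6)$ are precisely the unimodular types), and the use of \eqref{eq:explicit_A} in Milnor's frame is legitimate because the endomorphism $A$ is intrinsically determined by the Clifford action of $\widetilde{\Uplambda}$ (Lemma \ref{lemma:allornothing}), so the formula holds in \emph{any} positively oriented orthonormal basis, and Milnor's eigenbasis of the self-adjoint bracket map can always be chosen positively oriented. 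What your approach buys is conceptual clarity: it explains \emph{why} the admissible Lie algebras in Table \ref{table:3-dim} are the unimodular ones, it exhibits the common orthonormal eigenframe of $A$ and $\Ric$ explicitly (the paper only deduces its existence a posteriori), and it replaces the Jacobi-identity verification by the trivial fact that diagonal matrices commute; the cost is reliance on Milnor's structure theory as an external input, whereas the paper's computation is self-contained and does not need to pass through unimodularity at all.
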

\begin{proof}
    Let $\psi$ be such a spinor, and let $\Ric$ be the Ricci endomorphism of $g$. As, by hypothesis, $A$ is symmetric, using the well-known expression 
    \[
    \sum_{j=1}^{3} e_j \mathrm{R}(X,e_j) \cdot \psi = -\frac{1}{2} \Ric(X) \cdot \psi \, , 
    \]
    and the explicit expression of $A$ in \eqref{eq:explicit_A}, one obtains the matrix of $\Ric$ in the basis $(e_1,e_2,e_3)$: 
    \begin{equation} \label{eq:ricci_matrix}
        \resizebox{\textwidth}{!}{
            $\begin{pmatrix}
                \frac{1}{2} \left(-(c_{12}^{3})^2-(c_{13}^{2})^2-4 (c_{13}^{3})^2+(c_{23}^{1})^2\right) & -(c_{12}^{3}+c_{13}^{2}-c_{23}^{1}) c_{23}^{2}-2 c_{13}^{3} c_{23}^{3} & (c_{12}^{3}+c_{13}^{2}+c_{23}^{1}) c_{23}^{3}-2 c_{13}^{3} c_{23}^{2} \\
                -(c_{12}^{3}+c_{13}^{2}-c_{23}^{1}) c_{23}^{2}-2 c_{13}^{3} c_{23}^{3} & \frac{1}{2} \left((c_{13}^{2})^2-(c_{12}^{3}-c_{23}^{1})^2-4 (c_{23}^{3})^2\right) & c_{13}^{3} (-c_{12}^{3}+c_{13}^{2}+c_{23}^{1})+2 c_{23}^{2} c_{23}^{3} \\
                (c_{12}^{3}+c_{13}^{2}+c_{23}^{1}) c_{23}^{3}-2 c_{13}^{3} c_{23}^{2} & c_{13}^{3} (-c_{12}^{3}+c_{13}^{2}+c_{23}^{1})+2 c_{23}^{2} c_{23}^{3} & \frac{1}{2} \left((c_{12}^{3})^2-(c_{13}^{2}+c_{23}^{1})^2-4 (c_{23}^{2})^2\right) \\
               \end{pmatrix}$ \, . 
        }
        \end{equation}
        
    Using the Jacobi identity, one can easily check that $\Ric$ and $A$ commute. 
\end{proof}

\begin{rem} \label{rem:ricci}
    As a consequence of Theorem \ref{thm:ricci_A}, there exists a global orthonormal frame of left-invariant vector fields on $G$ which are eigenvectors of both $A$ and $\Ric$. This illustrates one of the main features of spin geometry, namely that it reduces curvature calculations from a second-order problem to a first-order one.
\end{rem} 

For example, consider the case of the Heisenberg group $H_3$, which has Lie algebra $\mathfrak{h}_3 \cong \mathrm{Bian}(\text{II})$, with an arbitrary orientation and left-invariant metric. Take $\{Z , E , F \}$ an orthonormal basis of $\mathfrak{h}_3$ as in \eqref{eq:basis_h_2}, satisfying $[E,F]=a Z , \, [E,Z]=[F,Z]=0$, for some $a >0$. Identify now the Lie algebra with the space of left-invariant vector fields in the usual way. We saw -- see the proof of Theorem \ref{thmm:main} -- that, in this case, the endomorphism $A$ is symmetric and has two distinct eigenvalues, which give rise to two left-invariant subbundles $\Xi = \vecspan\{Z\}$ and $\Xi^{\perp}= \vecspan\{E,F\}$ of the tangent bundle of $H_3$. The distribution $\Xi^{\perp}$ defines a contact metric structure on $H_3$, and $\Xi$ is the corresponding Reeb distribution, because $[E,F] \notin \Xi$. The matrix of the Ricci endomorphism in this basis is obtained by substituting the structure constants into \eqref{eq:ricci_matrix}, yielding 
\[
\frac{a^2}{2}\begin{pmatrix}
 1 & 0 & 0 \\
 0 & -1 & 0 \\
 0 & 0 & -1 \\
\end{pmatrix} \, , 
\]
which shows that any left-invariant metric on $H_3$ is $\eta$-Einstein. 

\begin{appendices}
  \addtocontents{toc}{\protect\setcounter{tocdepth}{0}}

  \section{Explicit matrices} \label{appendix}
  
  Let $G$ be a connected three-dimensional Lie group with Lie algebra $\mathfrak{g}$. 
  \[ \mathfrak{g} = \vecspan_{\mathbb{R}} \{ f_1 , f_2 , f_3 \} \, . \]  
  A left-invariant metric on $G$ corresponds to an inner product $B$ on $\mathfrak{g}$. By applying the Gram-Schmidt process to the oriented basis $(f_1,f_2,f_3)$, we obtain an oriented $B$-orthonormal basis $(e_1,e_2,e_3)$. More specifically, there exists a matrix 
  \[ 
      P = \begin{pmatrix}
          \alpha & \beta  & \gamma \\
           0 & \varepsilon & \zeta \\
           0 & 0 & \iota \\
      \end{pmatrix} \in \GL^{+}(3,\mathbb{R})   
  \]
  such that 
  \[
      e_1 = \alpha f_1 \, , \quad e_2 = \beta f_1 + \varepsilon f_2 \, , \quad \text {and} \quad e_3 = \gamma f_1 + \zeta f_2 + \iota f_3 \, . 
      \]
  We saw in the proof of Lemma \ref{lemma:allornothing} that there exists $A \in \End(\mathfrak{g})$ such that, for all invariant spinors $\phi \in \Sigma$ and all $X \in \mathfrak{g}$, 
  \[ 
  \widetilde{\Uplambda}(X) \cdot \phi = A(X) \cdot \phi \, . 
  \]
  For each three-dimensional Lie algebra in Bianchi's list \cite{bianchi}, we give an explicit expression of the matrix of $A$ in the basis $(e_1 , e_2 , e_3)$. These matrices are obtained by following the same procedure we illustrated in Section \ref{sec:heisenberg}. We used Wolfram Mathematica to perform symbolic computations and solve linear systems of equations that depend on parameters. To ensure the reliability of the solutions, we verified them manually to address any potential limitations associated with the use of \texttt{Solve}. We use the structure constants given in \cite{M63}. 

  \subsection{$\mathrm{Bian}(\text{I})$}

  $\mathrm{Bian}(\text{I})$ is the abelian three-dimensional Lie algebra, whose associated matrix $A$ is clearly the zero matrix. 

  \subsection{$\mathrm{Bian}(\text{II})$}
  
  The Lie bracket is given by 
  \begin{center}
      \begin{tabular}{c|ccc}
          & $f_1$ & $f_2$ & $f_3$ \\
       \hline
       $f_1$ & $0$ & $0$ & $0$ \\
       $f_2$ & $0$ & $0$ & $f_1$ \\
       $f_3$ & $0$ & $-f_1$ & $0$ \\
       \end{tabular}
  \end{center}
  meaning that $[f_1,f_2]=[f_1,f_3]=0$, $[f_2,f_3]=f_1$, and so on. This is the three-dimensional Heisenberg algebra $\mathfrak{h}_3$. In this case, 
  \[ A = \frac{\det(P)}{4\alpha^2} \begin{pmatrix}
      - 1 & 0 & 0 \\
      0 & 1 & 0 \\
      0 & 0 & 1 \\
  \end{pmatrix} \, , 
  \]
  As $\det(P) \neq 0$, this matrix always has exactly two distinct eigenvalues.  
  
  \subsection{$\mathrm{Bian}(\text{III})$}
  
  The Lie bracket is given by 
  \begin{center}
      \begin{tabular}{c|ccc}
          & $f_1$ & $f_2$ & $f_3$ \\
       \hline
       $f_1$ & $0$ & $f_1$ & $0$ \\
       $f_2$ & $-f_1$ & $0$ & $0$ \\
       $f_3$ & $0$ & $0$ & $0$ \\
       \end{tabular}
      \end{center}
  
  This is a direct sum of the only non-abelian two-dimensional Lie algebra and the (trivial) one-dimensional one. Then, 
  \[ A = \frac{1}{4 \alpha} \begin{pmatrix}
      \gamma \varepsilon - \beta \zeta & 0 & 0 \\
      2 \alpha \zeta & \beta \zeta - \gamma \varepsilon & 0 \\
      -2 \alpha \varepsilon & 0 & \beta \zeta - \gamma \varepsilon \\
      \end{pmatrix} \, , 
  \]
  which is not symmetric, as $P$ is non-singular.  
  
  \subsection{$\mathrm{Bian}(\text{IV})$}
  
  The Lie bracket is given by
  \begin{center}
      \begin{tabular}{c|ccc}
          & $f_1$ & $f_2$ & $f_3$ \\
       \hline
       $f_1$ & $0$ & $0$ & $f_1$ \\
       $f_2$ & $0$ & $0$ & $f_1 + f_2$ \\
       $f_3$ & $-f_1$ & $-f_1 - f_2$ & $0$ \\
       \end{tabular}
      \end{center}
  
  In this case, 
  \[ A = \frac{1}{4 \alpha}\begin{pmatrix}
      - \iota \varepsilon & -2 \alpha \iota & 0 \\
      2 \alpha \iota & \iota \varepsilon & 0 \\
      0 & 0 & \iota \varepsilon \\
      \end{pmatrix} \, . 
  \]
  The condition $\det(P) \neq 0$ implies that $A$ is not symmetric.
  
  \subsection{$\mathrm{Bian}(\text{V})$}
  
  We denote by $\mathrm{Bian}(\text{V})$ the Lie algebra with Lie bracket given by 
  \begin{center}
      \begin{tabular}{c|ccc}
          & $f_1$ & $f_2$ & $f_3$ \\
       \hline
       $f_1$ & $0$ & $0$ & $f_1$ \\
       $f_2$ & $0$ & $0$ & $f_2$ \\
       $f_3$ & $-f_1$ & $- f_2$ & $0$ \\
       \end{tabular}
      \end{center}
  The matrix $A$ in this case is given by 
  \[ A = \begin{pmatrix}
      0 & -\frac{\iota}{2} & 0 \\
      \frac{\iota}{2} & 0 & 0 \\
      0 & 0 & 0 \\
     \end{pmatrix}
      \, ,   
  \]
  which is not symmetric, as $\iota \neq 0$. 

  \subsection{$\mathrm{Bian}(\text{VI}_{x})$, $-1 \leq x < 1$, $x \neq 0$}
  
  For each $-1 \leq x < 1$, $x \neq 0$, we denote by $\mathrm{Bian}(\text{VI}_{x})$ the Lie algebra with Lie bracket given by 
  \begin{center}
      \begin{tabular}{c|ccc}
          & $f_1$ & $f_2$ & $f_3$ \\
       \hline
       $f_1$ & $0$ & $0$ & $f_1$ \\
       $f_2$ & $0$ & $0$ & $x f_2$ \\
       $f_3$ & $-f_1$ & $-x f_2$ & $0$ \\
       \end{tabular}
      \end{center}
  
  Note that, for $x = -1$, this is the Poincar\'{e} algebra $\mathfrak{p}(1,1)$, \textit{i.e.}, the Lie algebra of the Poincar\'{e} group $P(1,1)$ --see \cite[Section 1.2.5]{hall}. For general $x$, 
  \[ A = \begin{pmatrix}
      \frac{(x-1) \beta \iota}{4 \alpha} & -\frac{\iota x}{2} & 0 \\
      \frac{\iota}{2} & -\frac{(x-1) \beta \iota}{4 \alpha} & 0 \\
      0 & 0 & -\frac{(x-1) \beta \iota}{4 \alpha} \\
     \end{pmatrix}
      \, .  
  \]
  This matrix is symmetric if, and only if, $x = -1$. In this case, its eigenvalues are 
  \[ 
      \frac{\beta \iota}{2 \alpha}\, , \quad \pm \frac{\sqrt{\alpha^2 \iota^2 + \beta^2 \iota^2}}{2 \alpha}\, , 
  \]
  which are always different.  
  
  \subsection{$\mathrm{Bian}(\text{VII}_{y})$, $y \geq 0$} 
  
  For $y \geq 0$, the Lie bracket is given by
  \begin{center}
      \begin{tabular}{c|ccc}
          & $f_1$ & $f_2$ & $f_3$ \\
       \hline
       $f_1$ & $0$ & $0$ & $y f_1 - f_2$ \\
       $f_2$ & $0$ & $0$ & $f_1 + y f_2$ \\
       $f_3$ & $-y f_1 + f_2$ & $-f_1 -y f_2$ & $0$ \\
       \end{tabular}
      \end{center}
  
  For $y=0$, this is the Euclidean algebra $\mathfrak{e}(2)$, \textit{i.e.}, the Lie algebra of the Lie group $E(2)$ --see \cite[Section 1.2.5]{hall}. For general $y \geq 0$, 
  \[ A = \frac{1}{4 \det(P)}\begin{pmatrix}
      \iota^2 (\alpha^2-\beta^2-\varepsilon^2) & 2 \alpha \iota^2 (\beta-\varepsilon y) & 0 \\
      2 \alpha \iota^2 (\beta+\varepsilon y) & \iota^2 (-\alpha^2+\beta^2+\varepsilon^2) & 0 \\
      0 & 0 & \iota^2 (\alpha^2+\beta^2+\varepsilon^2) \\
  \end{pmatrix}\, , 
  \] 
  which is symmetric if, and only if, $y=0$. In this case, $A$ has eigenvalues 
  \[ \lambda \, , \quad \pm \sqrt{\lambda^2 - \frac{1}{4} \iota^2} \, , \qquad \text{where} \quad \lambda = \frac{1}{4 \det(P)} \iota^2 \left( \alpha^2 + \beta^2 + \varepsilon^2 \right) \, . \] 
  These eigenvalues are generically different. 
  
  \subsection{$\mathrm{Bian}(\text{VIII})$} 

  The Lie bracket is given by
  \begin{center}
      \begin{tabular}{c|ccc}
          & $f_1$ & $f_2$ & $f_3$ \\
       \hline
       $f_1$ & $0$ & $f_1$ & $2 f_2$ \\
       $f_2$ & $- f_1$ & $0$ & $f_3$ \\
       $f_3$ & $-2 f_2$ & $-f_3$ & $0$ \\
       \end{tabular}
      \end{center}
  
      This is the Lie algebra $\mathfrak{sl}(2,\mathbb{R})$. In this case,  
  \[ A = \frac{1}{2 \det(P)} \begin{pmatrix}
      a_{11} & a_{12} & a_{13} \\
      a_{21} & a_{22} & a_{23} \\
      a_{31} & a_{32} & a_{33} \\
  \end{pmatrix} \, , 
  \]
  where 
  \begin{align*}
    &a_{11} =  \iota (-\alpha^2 \iota + \beta^2 \iota + \gamma \varepsilon^2 - \beta \varepsilon \zeta) \, , \\
    &a_{12} = \alpha \iota (-2 \beta \iota + \varepsilon \zeta) \, , \\
    &a_{13} = -\alpha \iota \varepsilon^2  \, , \\
    &a_{21} = \alpha \iota (-2 \beta \iota + \varepsilon \zeta) \, , \\
    &a_{22} = \iota (\alpha^2 \iota - \beta^2 \iota - \gamma \varepsilon^2 + \beta \varepsilon \zeta)  \, , \\ 
    &a_{23} = 0  \, , \\
    &a_{31} = -\alpha \iota \varepsilon^2 \, , \\
    &a_{32} = 0 \, , \\
    &a_{33} = -\iota ((\alpha^2 + \beta^2) \iota + \gamma \varepsilon^2 - \beta \varepsilon \zeta) \, .
\end{align*}
  This matrix is symmetric, and has generically three distinct eigenvalues. 
  
  \subsection{$\mathrm{Bian}(\text{IX})$} 
  
  The Lie bracket is given by
  \begin{center}
      \begin{tabular}{c|ccc}
          & $f_1$ & $f_2$ & $f_3$ \\
       \hline
       $f_1$ & $0$ & $f_3$ & $-f_2$ \\
       $f_2$ & $-f_3$ & $0$ & $f_1$ \\
       $f_3$ & $f_2$ & $-f_1$ & $0$ \\
       \end{tabular}
      \end{center}
  
  This is the Lie algebra $\mathfrak{sp}(1) \cong \mathfrak{su}(2) \cong \mathfrak{so}(3)$. In this case, 
  \[ A = \frac{1}{4 \det(P)} \begin{pmatrix}
      a_{11} & a_{12} & a_{13} \\
      a_{21} & a_{22} & a_{23} \\
      a_{31} & a_{32} & a_{33} \\
  \end{pmatrix} \, , 
  \]
  where 
  \begin{align*}
      &a_{11} = \alpha^2 \left(\iota^2+\varepsilon^2+\zeta^2\right)-\iota^2 \left(\beta^2+\varepsilon^2\right)-(\gamma \varepsilon-\beta \zeta)^2 \, , \\
      &a_{12} = 2 \alpha \left(\beta \left(\iota^2+\zeta^2\right)-\gamma \varepsilon \zeta\right) \, , \\
      &a_{13} = 2 \alpha \varepsilon (\gamma \varepsilon-\beta \zeta) \, , \\
      &a_{21} = 2 \alpha \left( \beta \left(\iota^2+\zeta^2\right)-\gamma \varepsilon \zeta \right) \, , \\
      &a_{22} = -\alpha^2 \left(\iota^2-\varepsilon^2+\zeta^2\right)+\iota^2 \left(\beta^2+\varepsilon^2\right)+(\gamma \varepsilon-\beta \zeta)^2 \, , \\ 
      &a_{23} = 2 \alpha^2 \varepsilon \zeta \, , \\
      &a_{31} = 2 \alpha \varepsilon (\gamma \varepsilon-\beta \zeta) \, , \\
      &a_{32} = 2 \alpha^2 \varepsilon \zeta \, , \\
      &a_{33} = \alpha^2 \left(\iota^2-\varepsilon^2+\zeta^2\right)+\iota^2 \left(\beta^2+\varepsilon^2\right)+(\gamma \varepsilon-\beta \zeta)^2 \, .
  \end{align*}
  This matrix is symmetric, and has generically three distinct eigenvalues. 
  
\end{appendices}

\FloatBarrier

\section*{Acknowledgements}
The author would like to thank his supervisor, Marie-Am\'{e}lie Lawn, and Ilka Agricola for their comments on the first draft of this paper, as well as the anonymous referees for their insightful comments. The author is funded by the UK Engineering and Physical Sciences Research Council (EPSRC), grant EP/W5238721.

\bibliographystyle{alphaurl}
\bibliography{references.bib}

\end{document}